\newtheorem*{theorem*}{Theorem}
\newtheorem{theorem}{Theorem}[section]
\newtheorem{lemma}[theorem]{Lemma}
\theoremstyle{definition}
\newtheorem{remark}[theorem]{Remark}
\def\Z{\mathbb{Z}}
\title{A note on rational band moves}
\author[D. Chen]{Daren Chen}
\address{Department of Mathematics, Caltech}
\email{darenc@caltech.edu}
\author[J. Hom]{Jennifer Hom}
\address{School of Mathematics, Georgia Institute of Technology}
\email{hom@math.gatech.edu}
\author[M. Kim]{Min Hoon Kim}
\address{Department of Mathematics Education, Kyungpook National University}
\email{minhoonkim@knu.ac.kr}
\author[J. Park]{\\JungHwan Park}
\address{Department of Mathematical Sciences, KAIST}
\email{jungpark0817@kaist.ac.kr}
\author[Z. Wu]{Zhongtao Wu}
\address{Department of Mathematics, The Chinese University of Hong Kong}
\email{ztwu@math.cuhk.edu.hk}
\begin{document}

\maketitle

\begin{abstract} We introduce an oriented rational band move, a generalization of an ordinary oriented band move, and show that 
if a knot $K$ in the three-sphere can be made into the $(n+1)$-component unlink by $n$ oriented rational band moves, then $K$ is rationally slice. 
\end{abstract}

\section{Introduction}
One way to show that a knot $K$ in $S^3$ is \emph{slice} (in fact, \emph{ribbon}) is to find $n$ oriented bands that can be added to $K$ to result in the $(n+1)$-component unlink. Figure \ref{fig:bandmoves} shows what we will call an \emph{oriented rational band move}, so named because it corresponds to an oriented band move in a rational homology cobordism. Note that an ordinary oriented band move is a special case of an oriented rational band move.

\begin{figure}[htb!]
\begin{center}
\includegraphics[scale=0.75]{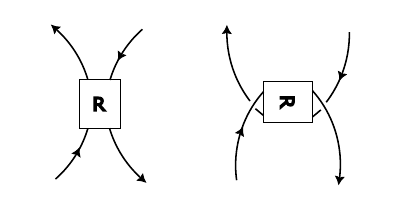}
\includegraphics[scale=0.75]{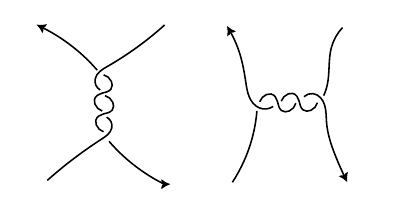}
\includegraphics[scale=0.75]{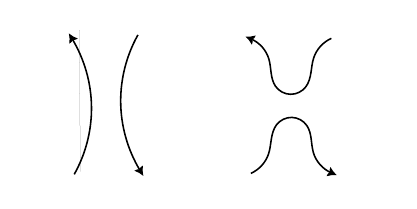}
\caption{Left, an oriented rational band move. The box represents any tangle with the given boundary orientations. Center, an example of a specific oriented rational band move. Right, an ordinary oriented band move, which is a special case of an oriented rational band move with the trivial tangle.}
\label{fig:bandmoves}
\end{center}
\end{figure}

We show that oriented rational band moves can be used to create rationally slice knots. Recall that a knot is called \emph{rationally slice} if it bounds a smooth properly embedded disk in a rational homology 4-ball.

\begin{theorem}\label{thm:Qbandmove}
Let $L$ be a $n$-component link in $S^3$. Suppose that $L'$ is a $(n+1)$-component link in $S^3$ obtained from $L$ by performing an oriented rational band move on the first component of $L$. Then there exist $n$ disjoint planar surfaces $F_1 \cup \dots \cup F_n$ smoothly properly embedded in a  rational homology $S^3 \times I$ such that
\begin{itemize}
\item  $F_i \cap \left(S^3 \times \{0\}\right)$ is the $i$th component of $L$, for each $i$,
\item $F_1 \cap \left(S^3 \times \{1\}\right)$ is the union of the first and the last component of $L'$,
\item   $F_i \cap \left(S^3 \times \{1\}\right)$ is the $i$th component of $L'$, for each $i>1$.
\end{itemize}

\noindent In particular, if a knot $K$ in $S^3$ can be made into the $(n+1)$-component unlink by $n$ oriented rational band moves, then $K$ is rationally slice.
\end{theorem}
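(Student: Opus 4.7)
The plan is to reduce the theorem to a local construction inside a 3-ball and then build the required rational homology cobordism by surgery on null-homologous curves.

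First, I would reduce to a local problem. The oriented rational band move is supported inside a 3-ball $B \subset S^3$ containing the tangle box and the attached band. Outside $B$, the links $L$ and $L'$ agree, so we can take a product cobordism $(S^3 \setminus \mathrm{int}\, B) \times I$ with vertical annuli connecting like portions of $L$ to $L'$. The content of the theorem thus reduces to constructing a 4-dimensional rational homology ball $W$ (a rational homology $B \times I$) containing a properly embedded planar cobordism $\Sigma$ whose boundary on $B \times \{0\}$ is $L \cap B$ and whose boundary on $B \times \{1\}$ is $L' \cap B$, with the combinatorial structure of a pair of pants for the first component and annuli for the others.

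Second, I would construct $W$ and $\Sigma$ by a crossing-change argument. If the tangle $T$ in the box were trivial, the rational band move would be an ordinary band move, and the local cobordism would be the standard saddle inside $B \times I$. For a nontrivial $T$, observe that $T$ can be transformed to the trivial tangle by a sequence of crossing changes. A crossing change at an oppositely-oriented crossing is realized by $\pm 1$-surgery on a small unknot $U$ encircling the crossing; because the two local strands have opposite orientations, $\mathrm{lk}(U,T) = 0$, so $U$ is null-homologous, and the corresponding 2-handle attachment preserves rational homology. A same-oriented crossing change can be handled by pairing it with an opposite-sign surgery (made possible by the prescribed boundary orientations) so that the combined homological effect cancels. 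After all necessary surgeries, $T$ becomes trivial inside a rational homology ball $W$, and the standard saddle cobordism, made disjoint from the 2-handle cores, yields $\Sigma$.

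The rational sliceness statement then follows by induction: $n$ successive rational band moves on a knot $K$ yielding the $(n+1)$-component unlink produce, by stacking the local cobordisms, a rational homology $S^3 \times I$ containing planar surfaces from $K$ to the unlink; capping off the unlink with disjoint disks in a standard $4$-ball yields a smooth disk for $K$ embedded in a rational homology $4$-ball. The main obstacle is verifying that any tangle with the given boundary orientations admits a sequence of crossing changes whose cumulative surgery description is null-homologous. The oppositely-oriented case is automatic, but same-oriented crossings must be paired (or absorbed by auxiliary null-homologous bands) in a way that exploits the specific orientation pattern on $\partial B$. Once tangle-trivialization inside a rational homology ball is established, extending the saddle cobordism across the surgery trace is routine.
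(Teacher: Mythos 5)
Your reduction to a local model and the final stacking/capping argument are fine, but the core construction fails: attaching a $2$-handle to $S^3 \times I$ (or to $B\times I$) along a $\pm 1$-framed null-homologous unknot does \emph{not} preserve rational homology. The resulting cobordism is a blow-up, $(S^3\times I)\,\#\,(\pm\mathbb{CP}^2)$: the core of the $2$-handle capped off with a disk bounded by the unknot is a closed surface of self-intersection $\pm1$, so $b_2$ increases by one and the trace is not a rational homology $S^3\times I$. The condition $\mathrm{lk}(U,T)=0$ (oppositely oriented strands) only guarantees that this new second-homology class is unlinked from the surface $\Sigma$; it does not remove it. Pairing a $+1$-surgery with a $-1$-surgery does not help either: the combined contribution to $H_2$ is $\langle 1\rangle\oplus\langle -1\rangle$, not zero. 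A sanity check that something must be wrong: any tangle (indeed any knot) can be trivialized by crossing changes, so if your construction were valid it would show that every knot is rationally slice, contradicting the fact that the Tristram--Levine signatures obstruct rational sliceness.

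The paper's proof avoids crossing changes entirely. It first shows that $L\subset S^3$ is rationally concordant, in $\left(L(2,1)^\circ\times I\right)^\circ$, to $L\#\widetilde{U}\#-\widetilde{U}\subset L(2,1)\#-L(2,1)$, where $\widetilde{U}$ is the core of $2$-surgery on the unknot. In that lens-space connected sum, an \emph{ordinary} oriented band move --- with the band running over the $2$-framed surgery curve --- followed by a handle slide and isotopies realizes exactly the tangle of the oriented rational band move, producing the planar surfaces inside $M\times I$ with no change to $b_2$. Concatenating the two rational concordances with this product piece gives the desired rational homology $S^3\times I$ (the punctured double of $L(2,1)^\circ\times I$, i.e., $Z_0^\circ$). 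If you wanted to salvage a surgery-theoretic version of your idea, each $2$-handle would have to be followed by a rationally cancelling $3$-handle, which is in effect what the $L(2,1)^\circ\times I$ construction accomplishes.
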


See Figure \ref{fig:figureeight} for an explicit example.

\begin{figure}[htb!]
\begin{center}
\includegraphics[scale=0.9]{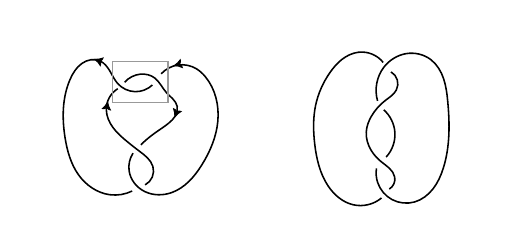}
\caption{An oriented rational band move from the figure eight knot to the unlink. The gray box indicates the tangle \textsf{R} in the oriented rational band move.}
\label{fig:figureeight}
\end{center}
\end{figure}

\begin{remark}
It follows from the proof of Theorem~\ref{thm:Qbandmove} that the 4-manifold in which $K$ bounds a smooth properly embedded disk is $(\#_n Z_0)^\circ$, where $Z_0$ is the closed rational homology 4-sphere from \cite[p.~2]{Levine} and $(\#_n Z_0)^\circ$ is $\#_n Z_0$ minus a small open ball. Moreover, the proof implies that, if $n$ oriented rational band moves can be performed disjointly, then $K$ is rationally slice in $Z_0^\circ$. See Section \ref{sec:Qbandmove} for more details.
\end{remark}


\subsection*{Acknowledgements}
JH is partially supported by NSF grant DMS-2104144 and a Simons Fellowship. Part of this work was done while JH was in residence at the Simons Laufer Mathematical Sciences Institute (formerly MSRI) during Fall 2023, supported by NSF Grant DMS-1928930.
MHK is partially supported by Samsung Science and Technology Foundation (SSTF-BA2202-01) and the NRF Young researcher program (2021R1C1C1012939). JP is partially supported by Samsung Science and Technology Foundation (SSTF-BA2102-02) and the POSCO TJ Park Science Fellowship. ZW is partially supported by a grant from the Hong Kong Research Grants Council (Project No. 14301819) and a direct grant from CUHK (Project No. 4053574). Part of this work was done during the Frontiers in Geometry and Topology Summer School and Research Conference at ICTP during Summer 2022 and during the Low-dimensional Topology Workshop at KIAS in Spring 2019.

\subsection*{Conventions}
For a given manifold $X$, we write $X^\circ$ to denote $X$ minus a small open ball. Given a oriented 3-manifold $Y$, let $-Y$ be the manifold $Y$ with reversed orientation. Given a knot $K$ in $Y$, let $-K \subset -Y$ be the knot obtained by reversing both the orientations of $K$ and $Y$. Throughout the article, we work in the smooth category.

\section{An oriented rational band move}\label{sec:Qbandmove}

In this section, we will prove Theorem \ref{thm:Qbandmove}. The proof is similar in spirit to \cite{FintushelStern}; see also \cite[Theorem 4.16]{ChaMAMS}. 

Let $Y$ and $Y'$ be rational homology 3-spheres. Recall that two $n$-component links $L \subset Y$ and $L' \subset Y'$ are \emph{rationally concordant}, if  there  exist $n$ disjoint annuli $A_1 \cup \dots \cup A_n$ smoothly properly embedded in a  rational homology $S^3 \times I$ such that $A_1 \cup \dots \cup A_n$  cobound $L$ and $L'$, and $A_i \cap \left(S^3 \times \{0\}\right)$ is the $i$th component of $L$. Lastly, let $\widetilde{U}$ denote the core of surgery in $S^3_2(\textup{unknot}) = L(2,1) $; that is, $\widetilde{U}$ is the ``unknotted'' generator of $H_1(L(2,1), \Z)$.

We begin with the following observation:

\begin{lemma}\label{lem:concinY}
Let $Y$ be a rational homology 3-sphere. If $K \subset Y$ is concordant to $J \subset Y$ in $Y \times I$, then $K \# -J \subset Y \# -Y$ bounds a disk in the rational homology 4-ball $Y^\circ \times I$. In particular, if $L$ is link in $S^3$, then $$L \subset S^3 \qquad\text{ and }\qquad L\#\widetilde{U}\#-\widetilde{U} \subset L(2,1) \# - L(2,1)$$ are rationally concordant in $\left(L(2,1)^\circ \times I\right)^\circ$. 
\end{lemma}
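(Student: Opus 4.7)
The plan is to prove the general concordance statement first, then deduce the ``In particular'' conclusion by specializing to $Y = L(2,1)$ with the trivial concordance of $\widetilde{U}$. For the general part, I would take the concordance annulus $A \subset Y\times I$ with $A\cap(Y\times\{0\}) = K$ and $A\cap(Y\times\{1\}) = J$, choose a small open ball $B_0 \subset Y$ disjoint from the projection of $A$ to $Y$, and observe that $A$ then lies in $Y^\circ \times I$ where $Y^\circ = Y\setminus B_0$. The crucial observation is that $\partial(Y^\circ\times I)$ is naturally the connect sum $Y \# -Y$: the two ends $Y^\circ\times\{0\}$ and $Y^\circ\times\{1\}$ inherit opposite orientations from the product and are glued along the corridor $\partial B_0\times I$, which collapses to the connect-sum 2-sphere. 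Under this identification, $\partial A$ consists of $K$ in the first copy of $Y^\circ$ and $-J$ in the second.

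The next step is to upgrade the annulus $A$ to a disk with boundary $K\#-J$ by attaching a band $b$ in $\partial(Y^\circ\times I)$ whose core is an arc on the connect-sum sphere joining $K$ and $-J$, then pushing $b$ slightly into the interior of $Y^\circ\times I$. Since attaching this band joins two boundary circles into one and raises the Euler characteristic by $1$, the result $A\cup b$ is a properly embedded disk with boundary $K\#-J$. Because $Y$ is a rational homology 3-sphere, $Y^\circ\times I$ is a rational homology 4-ball, completing the first half of the lemma.

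For the ``In particular'' conclusion, I would apply the above with $Y = L(2,1)$, $K = J = \widetilde{U}$, and the product concordance $\widetilde{U}\times I$, obtaining a smooth disk $D \subset L(2,1)^\circ\times I$ with $\partial D = \widetilde{U} \# -\widetilde{U}$. Next, I would remove a small open 4-ball $B'$ from $L(2,1)^\circ\times I$ disjoint from $D$, producing $W = (L(2,1)^\circ\times I)^\circ$, which has boundary $S^3 \sqcup (L(2,1)\#-L(2,1))$ and the rational homology of $S^3\times I$. Placing $L$ on $S^3 = \partial B'$, I would drag each of its $n$ components along disjoint arcs in $W$ to a parallel copy $L'$ sitting in a small ball of $L(2,1)\#-L(2,1)$ disjoint from $\widetilde{U}\#-\widetilde{U}$, producing $n$ disjoint product annuli.

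To finish, I would attach a band in $L(2,1)\#-L(2,1)$ between the first component $L_1'$ and $\widetilde{U}\#-\widetilde{U}$ realizing the connect sum operation, push it slightly into $W$, and union it with $D$; an Euler characteristic count $0+1+1-1-1=0$ with two boundary circles shows that this upgrades the first product annulus to an annulus with boundary $L_1$ on one end and $L_1 \# \widetilde{U} \# -\widetilde{U}$ on the other, while the remaining $n-1$ product annuli are left alone. The main obstacle is really just bookkeeping with orientations: one must check that the $Y^\circ\times\{1\}$ end of $\partial(Y^\circ\times I)$ inherits the orientation of $-Y^\circ$, so that $J$ appears as $-J$ on the boundary and the band genuinely realizes $K\#-J$ rather than $K\#J$; once this is pinned down, the Euler characteristic and homology computations are routine.
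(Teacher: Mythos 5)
There is a genuine error in the main step of your argument. Attaching a band (a two--dimensional $1$--handle) to a surface \emph{lowers} the Euler characteristic by one, it does not raise it: $\chi(A\cup b)=\chi(A)+\chi(b)-\chi(A\cap b)=0+1-2=-1$. Since $K$ and $-J$ are the two boundary circles of the \emph{same} connected annulus $A$, banding them together yields a connected surface with one boundary circle and $\chi=-1$, i.e.\ a once--punctured torus with boundary $K\#-J$, not a disk. (Your Euler characteristic count in the ``in particular'' step is fine, but only because there the band joins two \emph{distinct} components --- a product annulus and the disk $D$ --- so $0+1+1-1-1=0$ with two boundary circles really does give an annulus; when the band joins two boundary circles of a single component it increases the genus instead.) Since the second half of your proof consumes the disk $D$ produced by the first half, the error propagates: with your construction the first component of $L$ would cobound a genus--one surface rather than an annulus, which is not a rational concordance.

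The fix is to cut rather than to band, which is what the paper does. Choose an arc $\gamma\subset A$ running from $K$ to $J$ and remove an open tubular neighborhood $\nu(\gamma)$ from $Y\times I$. Since $\gamma$ is isotopic to a product arc $\{pt\}\times I$, the complement is diffeomorphic to $Y^{\circ}\times I$ with boundary $Y\#-Y$, and $A\setminus\nu(\gamma)$ is the annulus cut open along a spanning arc, hence a disk, with boundary $K\#-J$. Cutting along a properly embedded arc raises $\chi$ by one, which is the arithmetic you wanted. The rest of your write--up --- the identification of $\partial(Y^{\circ}\times I)$ with $Y\#-Y$, the orientation bookkeeping showing that $J$ appears as $-J$, and the tubing of the first component of $L$ into $D$ inside $\left(L(2,1)^{\circ}\times I\right)^{\circ}$ --- is consistent with the paper's intended argument and goes through once the disk is produced correctly.
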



\begin{remark}
In order to be well-defined, the connected sum $L \# \widetilde{U}$ requires a choice of component of $L$.  Lemma~\ref{lem:concinY} is true for any choice of component. It will be clear in the proofs of Lemma~\ref{lem:Qband} and Theorem~\ref{thm:Qbandmove} which component is being used (the choice depends on where the oriented rational band move in performed), but this choice has no impact on the proofs.
\end{remark}


\begin{proof}
Suppose that $(Y, K)$ is concordant to $(Y, J)$ in $Y \times I$ via an annulus $A$. Remove a small tubular neighborhood of an arc $\gamma \subset A$ that goes from $K$ to $J$. The result is a disk in $Y^\circ \times I$ whose boundary is $K \# -J$ in $Y \# -Y$. In particular, we have that $\widetilde{U}\#-\widetilde{U} \subset L(2,1) \# - L(2,1)$ is rationally slice in $L(2,1)^\circ \times I$. This immediately implies the latter part of the statement.
\end{proof}

\begin{lemma}\label{lem:Qband}
Let $L$ be a $n$-component link in $S^3$ and $M$ be the rational homology 3-sphere $L(2,1) \# - L(2,1)$. Suppose that $L'$ is a $(n+1)$-component link in $S^3$ obtained from $L$ by an oriented rational band move performed on the first component of $L$.
Then there exist $n$ disjoint planar surfaces $F_1 \cup \dots \cup F_n$ smoothly properly embedded in $M \times I$ such that 
\begin{itemize}
\item  $F_i \cap \left(M \times \{0\}\right)$ is the $i$th component of $L\# \widetilde{U}\#-\widetilde{U} \subset M$, for each $i$, 
\item $F_1 \cap \left(M \times \{1\}\right)$ is the union of the first and the last component of $L'\# \widetilde{U}\#-\widetilde{U} \subset M$,
\item   $F_i \cap \left(M \times \{1\}\right)$ is the $i$th component of $L'\# \widetilde{U}\#-\widetilde{U} \subset M$, for each $i>1$. 
\end{itemize}\end{lemma}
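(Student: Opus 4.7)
The plan is to realize the oriented rational band move on $L \# \widetilde{U} \# -\widetilde{U}$ inside $M$ as an \emph{ordinary} band move, and then take the $F_i$ to be the trace of this band attachment on the first component together with product cobordisms on the remaining, unchanged components. Throughout, I would localize the rational band move to a small 3-ball $B \subset S^3$ (with $L$ and $L'$ agreeing outside $B$), and place the connect-sum summands $\widetilde{U}$ and $-\widetilde{U}$ on the first component of $L$, away from $B$.

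The heart of the argument is to construct an embedded band $b$ in $M$, attached to the first component of $L \# \widetilde{U} \# -\widetilde{U}$, whose saddle move yields a link isotopic to $L' \# \widetilde{U} \# -\widetilde{U}$. The endpoints of $b$ sit on $\partial B$ at the locations dictated by the rational band move, but its interior is routed outside of $B$ and through the $L(2,1)$ summand carrying $\widetilde{U}$ (and the mirror $-L(2,1)$ summand carrying $-\widetilde{U}$), thereby bypassing the tangle $\mathsf{R}$. This routing exists because the order-two torsion in $H_1(L(2,1);\Z)$, together with the slice disk for $\widetilde{U}\#-\widetilde{U}$ in $L(2,1)^\circ \times I$ supplied by Lemma~\ref{lem:concinY}, provides exactly the flexibility needed to slide the band around $\mathsf{R}$ while staying in $M$. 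Concretely, I would draw a surgery diagram for $M$ as $\pm 2$-framed unknots next to $\mathsf{R}$ and the connect-sum arcs, and exhibit a sequence of isotopies and handleslides of $b$ over the $\pm 2$-framed surgery curves that transform the rational band into an ordinary band whose saddle realizes the move.

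Given such a band $b$, the surface $F_1$ is the standard saddle cobordism along $b$: a thrice-punctured sphere in $M \times I$ whose intersection with $M \times \{0\}$ is the first component of $L \# \widetilde{U}\#-\widetilde{U}$ and whose intersection with $M \times \{1\}$ is the union of the first and last components of $L' \# \widetilde{U}\#-\widetilde{U}$. For $i>1$, I let $F_i$ be simply the product cylinder from the $i$-th component of $L$ to the $i$-th component of $L'$, which are identical as subsets of $S^3$. Since the band move and the connect-sum summands are supported in a localized region, after a small perturbation the $F_i$ are disjoint, and each is planar by construction.

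The principal obstacle is exhibiting the band $b$ rigorously. This is a handle/Kirby-calculus argument showing that the $+2$-surgery topology of $L(2,1)$ accounts for the ``twisting'' contributed by the tangle $\mathsf{R}$ in the definition of the rational band move: the band ``pays'' for passing the tangle by threading through $\widetilde{U}$, and the analogous mirror maneuver in $-L(2,1)$ cancels any lingering obstruction. This is precisely the step where the specific choice $M=L(2,1)\#-L(2,1)$ is essential, and the construction runs in parallel with the Fintushel--Stern argument cited at the start of this section.
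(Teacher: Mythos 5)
Your high-level strategy is the same as the paper's: realize the rational band move on $L\#\widetilde{U}\#-\widetilde{U}$ as an \emph{ordinary} oriented band move in $M$ whose band links the $2$-framed surgery curve, then take $F_1$ to be the trace of the saddle (composed with isotopy cylinders) and the remaining $F_i$ to be product annuli. The problem is that the step you defer --- actually exhibiting the band and verifying, via an explicit handle slide over the $2$-framed curve followed by isotopies, that the outcome is $L'\#\widetilde{U}\#-\widetilde{U}$ --- \emph{is} the content of the lemma. The paper carries it out as a concrete diagrammatic computation (Figure~\ref{fig:bandmovesequence}); you explicitly flag it as ``the principal obstacle'' and leave it unproved, so as written there is a genuine gap precisely at the heart of the argument.

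Moreover, the two justifications you offer in place of that computation point in the wrong direction. First, the slice disk for $\widetilde{U}\#-\widetilde{U}$ in $L(2,1)^\circ\times I$ from Lemma~\ref{lem:concinY} is a $4$-dimensional object and cannot supply ``flexibility'' for isotoping a band inside the $3$-manifold $M$; it plays no role in this lemma and is used only later, in the proof of Theorem~\ref{thm:Qbandmove}, to pass between $S^3$ and $M$. Second, the $-L(2,1)$ summand is inert here: in the paper's argument the band and the handle slide interact only with the $L(2,1)$ summand, and the figure omits $-L(2,1)$ entirely. The genuine ``lingering obstruction'' after the handle slide is not something cancelled by a mirror maneuver in $-L(2,1)$; it is that the computation terminates at $L'\#\widetilde{U}^r\#-\widetilde{U}$, with the string orientation of $\widetilde{U}$ reversed, and this is repaired by the observation that $\widetilde{U}$ and $\widetilde{U}^r$ are isotopic in $L(2,1)$. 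Your sketch does not address this orientation issue at all, and it is the one subtlety beyond the picture itself.
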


\begin{proof}
The proof is shown in Figure~\ref{fig:bandmovesequence}. We start with the connected sum of $L \subset S^3$ with $\widetilde{U}\#-\widetilde{U} \subset M$, and perform an oriented band move as shown. Note that the band interacts nontrivially with $H_1(M; \Z)$. Next, we perform a handle slide, followed by a sequence of isotopies. The careful reader will observe that Figure~\ref{fig:bandmovesequence} concludes with $L' \# \widetilde{U}^r \#-\widetilde{U} \subset M$, where $\widetilde{U}^r$ denotes $\widetilde{U}$ with its string orientation reversed. But in $L(2,1)$, we have that $\widetilde{U}$ and $\widetilde{U}^r$ are isotopic.
\end{proof}

\begin{figure}[htb!]
\begin{center}
\labellist
	\pinlabel \textcolor{blue}{\tiny{$2$}} at 70 177
	\pinlabel \tiny{isotopy} at 87 157
	\pinlabel \tiny{band move} at 189 157
	\pinlabel \tiny{handle slide} at 282 157
	\pinlabel \tiny{isotopy} at 42 57
	\pinlabel \tiny{isotopy} at 130 57
	\pinlabel \tiny{isotopy} at 221 57
	\pinlabel \textcolor{blue}{\tiny{$2$}} at 234 27
\endlabellist
\includegraphics[scale=1]{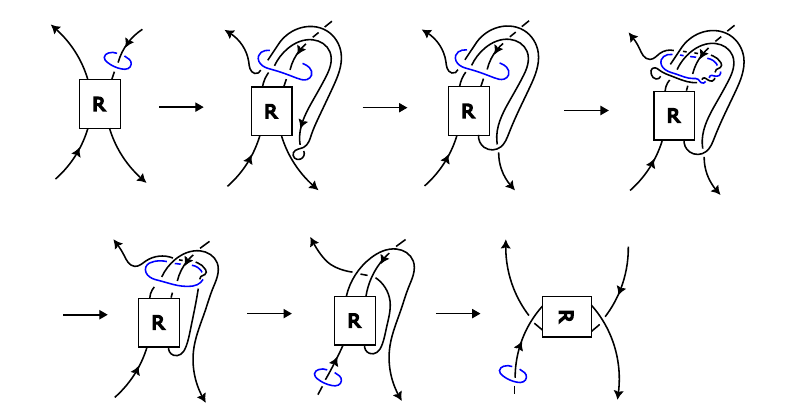}
\caption{An oriented band move in $M= L(2,1) \# - L(2,1)$, followed by a sequence of handle slides and isotopies. Here, we omit $-L(2,1)$ from the figure.}
\label{fig:bandmovesequence}
\end{center}
\end{figure}

%

%

We are now ready to prove Theorem \ref{thm:Qbandmove}.

\begin{proof}[Proof of Theorem \ref{thm:Qbandmove}]
Suppose that $L'$ is a $(n+1)$-component link in $S^3$ obtained from $L$ by performing an oriented rational band move on the first component of $L$. Again, let $M$ be the rational homology 3-sphere $L(2,1) \# - L(2,1)$. Then by Lemma~\ref{lem:Qband}, there exist $n$ disjoint planar surfaces $F_1 \cup \dots \cup F_n$ smoothly properly embedded in $M \times I$ that cobound $$L\#\widetilde{U}\#-\widetilde{U} \subset M \qquad\text{ and }\qquad L'\#\widetilde{U}\#-\widetilde{U} \subset M.$$ 


\noindent Moreover, $F_1 \cup \dots \cup F_n$ satisfy the conclusion of Lemma~\ref{lem:Qband}. Then by concatenating concordances  $$\text{from }L \subset S^3 \text{ to } L\#\widetilde{U}\#-\widetilde{U} \subset M
\qquad \text{ and } \qquad\text{from }L'\#\widetilde{U}\#-\widetilde{U} \subset M\text{ to }L' \subset S^3$$ given  by  Lemma~\ref{lem:concinY} with $F_1 \cup \dots \cup F_n$, we obtain the desired smooth properly embedded surfaces in a rational homology $S^3\times I$. Note that by Lemma~\ref{lem:concinY}, the rational homology $S^3\times I$ is diffeomorphic to $$\left(L(2,1)^\circ \times I\right)^\circ \cup_M -\left(L(2,1)^\circ \times I\right)^\circ.$$

Finally, suppose that the $(n+1)$-component unlink in $S^3$ is obtained from a knot $K \subset S^3$ by $n$ oriented rational band moves. This implies that each oriented rational band move increases the number of the components by one. Then it is clear that by stacking smooth properly embedded planar surfaces from above $n$ times, we get a connected planar surface smoothly properly embedded in a rational homology $S^3 \times I$ that connects $K \subset S^3$ and the $(n+1)$-component unlink in $S^3$. Since the unlink bounds smooth properly embedded $(n+1)$ disjoint disks in $B^4$, by gluing this with the planar surface, we obtain the desired disk in a rational homology 4-ball.\end{proof}

\begin{remark}
In \cite[p.\ 2]{Levine}, he proves that every strongly negative amphichiral knot in $S^3$ bounds a smooth properly embedded disk in $Z_0^\circ$, where $Z_0$ is the double of the $D^2$-bundle over $\mathbb{R}\mathrm{P}^2$ with orientable total space and Euler number $0$. It can be easily verified that $Z_0$ can be also obtained as the double of $L(2,1)^\circ \times I$ (see also \cite[Remark~1.2]{Levine} and \cite[Figure 1]{Levine}). 

Now, suppose that a knot $K$ in $S^3$ can be made into the $(n+1)$-component unlink by $n$ oriented rational band moves. Then it follows from  the proof of Theorem~\ref{thm:Qbandmove} that the 4-manifold in which $K$ bounds a smooth properly embedded disk is $(\#_n Z_0)^\circ$. Moreover, if the $n$ oriented rational band moves are performed disjointly, we may use the same blue circle with framing 2 in Figure~\ref{fig:bandmovesequence}, to perform the  oriented band move $n$ times to conclude that $K$ bounds a smooth properly embedded disk in $Z_0^\circ$.\end{remark}

\bibliographystyle{amsalpha}
\bibliography{bib}

\end{document}